\documentclass[11pt]{amsart}
\usepackage{pstricks,pst-plot}

\definecolor{Black}{cmyk}{0,0,0,1}
\definecolor{OrangeRed}{cmyk}{0,0.6,1,0}            % half magenta only, full yellow
\definecolor{DarkBlue}{cmyk}{1,1,0,0.20}
\definecolor{myblue}{rgb}{0.66,0.78,1.00}
\definecolor{Violet}{cmyk}{0.79,0.88,0,0}
\definecolor{Lavender}{cmyk}{0,0.48,0,0}

\parskip=\smallskipamount

\newtheorem{theorem}{Theorem}[section]
\newtheorem{lemma}[theorem]{Lemma}

\theoremstyle{definition}

\newtheorem{problem}[theorem]{Problem}

\author{J. E. Forn\ae ss}
\thanks{This article was written as part of the international research program "Several Complex Variables and Complex Dynamics" at the Centre for Advanced Study at the Norwegian Academy of Science and Letters in Oslo during the academic year 2016/2017.}
\thanks{Both authors are supported by the NRC grant number 240569}

\address{J. E. Forn\ae ss: Department of Mathematical Sciences, Norwegian University of Science
and Technology, 7491 Trondheim, Norway. }
\author{E. F. Wold}
\address{E. F. Wold: Department of Mathematics, University of Oslo, PO-BOX 1053 Blindern, 0316 Oslo, Norway.}

\newcommand{\bea}{\begin{eqnarray*}}
\newcommand{\eea}{\end{eqnarray*}}
\numberwithin{equation}{section}

\title[The Squeezing Function]{A non-strictly pseudoconvex domain for which the squeezing function tends to one towards the boundary}

%
%
%  THE DOCUMENT
%
%

\begin{document}

\maketitle

\begin{abstract}
In recent work by Zimmer it was proved that if $\Omega\subset\mathbb C^n$ is a bounded 
convex domain with $C^\infty$-smooth boundary, then $\Omega$ is strictly pseudoconvex 
provided that the squeezing function approaches one as one approaches the boundary.   We show that 
this result fails if $\Omega$ is only assumed to be $C^2$-smooth.
\end{abstract}

\section{Introduction}

We recall the definition of the squeezing function $S_\Omega(z)$ on a bounded domain $\Omega\subset\mathbb C^n$.
If $z\in\Omega$, and $f_z:\Omega\rightarrow\mathbb B^n$ is an embedding with $f_z(z)=0$, we set 
\begin{equation}
S_{\Omega,f_z}(z):=\sup\{r>0:B_r(0)\subset f_z(\Omega)\}, 
\end{equation}
and then
\begin{equation}
S_\Omega(z):=\sup_{f_z}\{S_{\Omega,f_z}(z)\}.
\end{equation}
A guiding question is the following: which complex analytic properties of $\Omega$ are encoded by the behaviour
of $S_\Omega$?  For instance, if $S_\Omega$ is bounded away from zero, then $\Omega$ is necessarily 
pseudoconvex,  and the Kobayashi-, Carath\'{e}odory-, Bergman- and the K\"{a}hler-Einstein metric are complete, and they are pairwise quasi-isometric (see \cite{Yeung}).   Recently, Zimmer \cite{Zimmer} proved that if 
\begin{equation}
\lim_{z\rightarrow b\Omega}S_\Omega(z)=1
\end{equation}
for a $C^\infty$-smooth, bounded convex domain, then $\Omega$ is necessarily strictly pseudoconvex.  In this short 
note we will show that this does not hold for $C^2$-smooth domains.

\begin{theorem}\label{main}
There exists a bounded convex $C^2$-smooth domain $\Omega\subset\mathbb C^n$ which is not strongly pseudoconvex, but 
\begin{equation}
\lim_{z\rightarrow b\Omega}S_\Omega(z)=1,
\end{equation}
where $S_\Omega(z)$ denotes the squeezing function on $\Omega$.
\end{theorem}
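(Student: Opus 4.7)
The plan is to exploit the gap between $C^2$ and $C^\infty$ regularity. Zimmer's obstruction comes from the fact that at a weakly pseudoconvex boundary point of a $C^\infty$-smooth convex domain, the tangential complex Hessian vanishes at some definite polynomial rate, and any adapted Pinchuk-type rescaling converges to a non-spherical convex model; in $C^2$-regularity the vanishing in the degenerate direction is only $o(|z-p_0|^2)$ with no forced rate, and that rate-free flexibility is what I want to use.

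Concretely, I would build $\Omega$ as a small, $C^2$ but not $C^3$, convex modification of the unit ball $\mathbb{B}^n$ near a single boundary point $p_0$. A natural template is to take a defining function of the form $\rho(z)=\|z\|^2-1+\chi(z)$, where $\chi$ is a $C^2$ but not $C^3$, compactly supported, convex perturbation engineered so that the complex Hessian of $\rho$ at $p_0$ acquires a zero eigenvalue in a complex tangential direction $v_0$, while $\chi$ and its first two derivatives vanish at $p_0$, and so that $b\Omega$ contains no analytic disks. One then needs to verify (i) convexity and $C^2$-smoothness from the corresponding properties of $\chi$, and (ii) failure of strict pseudoconvexity at $p_0$ via a direct Hessian computation with the vector $v_0$.

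The heart of the argument is $\lim_{z\to b\Omega}S_\Omega(z)=1$. At every strictly pseudoconvex boundary point, the standard non-isotropic scaling of Pinchuk immediately produces embeddings of $\Omega$ into $\mathbb B^n$ whose images converge to $\mathbb B^n$, hence $S_\Omega\to 1$; so only sequences $z_k\to p_0$ are at issue. Near $p_0$ I would choose, for each $z$, an affine chart in which $\rho(w)=\mathrm{Re}(w_1)+H(w)+\psi(w)$ with $H$ positive semi-definite Hermitian (degenerate in the direction $v_0$) and $\psi=o(|w|^2)$, and then exhibit an explicit holomorphic embedding $f_z:\Omega\hookrightarrow\mathbb B^n$ with $f_z(z)=0$ built from the tangent ball $B_{\rho(z)}$ and an anisotropic dilation calibrated to $z$, estimating from below the radius of a ball inscribed in $f_z(\Omega)$.

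The main obstacle, and the delicate point where the $C^2$ versus $C^\infty$ dichotomy is actually used, is controlling the squeezing function in the Levi-degenerate direction as $z_k\to p_0$. The usual anisotropic scaling adapted to strict pseudoconvexity rescales that direction by $\sqrt{\mathrm{dist}(z_k,b\Omega)}$ and so produces in the limit a product of $\mathbb B^{n-1}$ with a half-space, not a ball. In our $C^2$ setting the deviation in the degenerate direction is only $o(|z-p_0|^2)$ with no polynomial rate, so for each fixed scale one can choose a tighter dilation in $v_0$ whose rescaled image still exhausts $\mathbb B^n$; making this quantitative and uniform is what forces both the specific geometry of the modification in step two and the precise choice of embeddings in step three.
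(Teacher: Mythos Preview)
Your proposal has a genuine gap, and in fact an inconsistency already in the setup.

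First, the conditions you impose on $\chi$ are incompatible. If $\chi$ together with its first and second derivatives vanish at $p_0$, then the complex Hessian of $\rho=\|z\|^2-1+\chi$ at $p_0$ equals that of $\|z\|^2-1$, namely the identity; the Levi form of $\rho$ is then strictly positive in every complex tangential direction at $p_0$, and no degenerate $v_0$ can exist. To make $p_0$ weakly pseudoconvex you must allow $D^2\chi(p_0)\neq 0$, negative enough to cancel the ball's curvature in the $v_0$-direction. Convexity of $\Omega$ then forces the real tangential Hessian of $\rho$ to vanish on the real $2$-plane spanned by $v_0$ and $Jv_0$, and you must arrange this at $p_0$ only, without creating segments or analytic discs in $b\Omega$. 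None of this is specified.

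Second, and more importantly, the step you yourself label ``the main obstacle'' is the entire content of the theorem, and your outline does not resolve it. The claim that in $C^2$ regularity ``the deviation in the degenerate direction is only $o(|z-p_0|^2)$ with no polynomial rate, so for each fixed scale one can choose a tighter dilation in $v_0$ whose rescaled image still exhausts $\mathbb B^n$'' is not an argument: the $o(|z-p_0|^2)$ condition is a pointwise condition at $p_0$, whereas for a rescaling centred at an interior point $z_k$ to converge to $\mathbb B^n$ you need the boundary to be uniformly close to a sphere (of some, possibly $k$-dependent, radius) on a full neighbourhood of the nearest boundary point, at the scale dictated by $\mathrm{dist}(z_k,b\Omega)$. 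A generic $C^2$ convex perturbation with a single Levi-degenerate point carries no such uniformity, and an anisotropic dilation in the degenerate direction will then typically produce a half-space or cylindrical limit rather than $\mathbb B^n$, giving $\limsup S_\Omega(z_k)<1$. You acknowledge that ``making this quantitative and uniform'' dictates the construction, but you never say what the construction is.

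The paper's proof avoids scaling altogether and supplies exactly the missing quantitative input. It realises $\Omega$ as an increasing union of $C^\infty$ strictly convex domains $\Omega_k$: near the distinguished boundary point, $b\Omega_k$ is obtained by splicing the spheres of radii $j\le k$ on nested shells, so that the curvature of $b\Omega_k$ lies between $\tfrac{1}{k+m}$ and $\tfrac{1}{k-m}$ at every nearby boundary point (Lemma~\ref{curvature}). This two-sided curvature bound gives, at each such point, interior and exterior tangent balls of comparable radii, and an elementary squeezing estimate (Lemma~\ref{mainestimate}, resting on an earlier inequality of Forn\ae ss--Wold) yields $S_{\Omega_k}\ge 1-\tfrac{m}{k+m}$ on a collar of $b\Omega_k$ near the degenerate point. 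A normal-families argument (Lemma~\ref{increase}) then transfers the estimate to $\Omega=\bigcup_k\Omega_k$. The feature that makes this work --- and that your outline lacks --- is that by construction the boundary is, on every shell near the bad point, pinched between two tangent spheres whose radii have ratio $1+O(1/k)$; this is a specific geometric design, not a consequence of $C^2$ smoothness.
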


For further results about the squeezing function the reader may also consult the references \cite{DFW}, \cite{DGZ1},\cite{DGZ2},\cite{FR},\cite{FornæssWold1},\cite{KZ},\cite{LSY}, \cite{Yeung}, \cite{Zimmer}.   In the last section we will post some open problems.

\section{The construction}

\subsection{The construction in $\mathbb R^n$ and curvature estimates}

We start by describing a construction of a convex domain $\Omega$ in $\mathbb R^n$ with a single non-strictly 
convex point.  Afterwards we will explain how to make the construction give the conclusion of Theorem 
\ref{main} for each $n=2m$, when we make the identification with $\mathbb C^m$.  \

Let $x={x_1,...,x_n}$ denote the coordinates on $\mathbb R^n$.  For any $k\in\mathbb N$ we 
let $B_k$ denote the ball
\begin{equation}
B_k:=\{x\in\mathbb R^n:x_1^2+\cdot\cdot\cdot + x_{n-1}^2 + (x_n-k)^2<k^2\}.
\end{equation}
On some fixed neighbourhood of the origin, each boundary $bB_k$ may be written as a graph 
of a function 
\begin{equation}
x_n=\psi_k(x')=\psi_k(x_1,...,x_{n-1})= k-\sqrt{k^2-|x'|^2} =\frac{1}{2k}|x'|^2 + \mathrm{h.o.t.}
\end{equation}
Fix a smooth cut-off function $\chi(x')=\chi(|x'|)$ with compact support in $\{|x'|<1\}$
which is one near the origin.  We will create a new limit graphing function $f(x')$ by 
subsequently gluing the functions $\psi_k$ and $\psi_{k+1}$ by setting 
\begin{equation}
g_k(x') = \psi_{k}(x') + \chi(\frac{x'}{\epsilon_k})(\psi_{k+1}(x')-\psi_k(x')),
\end{equation}
where the sequence $\epsilon_k$ will converge rapidly to zero, and the boundary of our domain $\Omega$ will be defined (locally) as the graph $\Sigma$ of the function $f$ defined as follows: start by setting $f_k:=\psi_k$ for some $k\in\mathbb N$.  Then 
define $f_{k+1}$ inductively by setting $f_{k+1}=f_k$ for $\|x'\|\geq\epsilon_k$ and then $f_{k+1}=g_k$
for $\|x'\|<\epsilon_k$.  Finally we set $f=\lim_{k\rightarrow\infty} f_k$. \

To ensure that $\Omega$ is convex we will need to estimate the curvature of $\Sigma$, and estimates of 
the curvature of the partial graphs $\Sigma_k=\{x,g_k(x)\}$ will be necessary to prove Theorem \ref{main}.
Informally our goal is to show the following: \emph{There exist $N,m\in\mathbb N, N>m$, such that if $k\geq N$ and 
if $\epsilon_k$ is sufficiently small (depending on $k$), then 
$\Sigma_k$ curves, at every point and in all directions, more than $bB_{k+m}$ and less than $bB_{k-m}$}. \

We make this more precise.  The surface $\Sigma_k$ has a defining function $\rho_k(x)=g_k(x')-x_n$.
If $v_{p}$ is a tangent vector to $\Sigma_k$ at $p=(x',g_k(x))$, the curvature of $\Sigma_k$ in 
the direction of $v_{p}$ is defined as 
\begin{equation}\label{curvature1}
\kappa^{\Sigma_k}_{p}(v_p):=\frac{H\rho_k(p)(v_p)}{\|\nabla\rho_k(p)\|\|v_p\|^2},
\end{equation}
where $\nabla\rho_k$ is the gradient, and $H\rho_k$ is the Hessian of $\rho_k$ (which is equal to the Hessian of $g_k$).
The curvature \eqref{curvature1} depends only on the direction of $v_p$, and the curvature 
of $bB_k$ is $\frac{1}{k}$ at all points and in all directions.  The precise statement 
of our goal stated above is \

\begin{lemma}\label{curvature}
Let $\psi_k$ and $\chi$ be defined as above for $k\in\mathbb N$.  There exist $N,m\in\mathbb N, N>m$, such that 
if each $\epsilon_k$ is sufficiently small (depending on $k$), and $k\geq N$, then 
\begin{equation}
\frac{1}{k+m}\leq\kappa^{\Sigma_k}_p(v_p)\leq\frac{1}{k-m},
\end{equation}
for all $v_p$ tangent to $\Sigma_k$.
\end{lemma}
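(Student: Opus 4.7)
The plan is to estimate the Hessian of $g_k$ directly by the product rule and show it is a small perturbation of the Hessian of $\psi_k$, whose eigenvalues are all close to $1/k$ when $|x'|$ is small compared to $k$. Writing $\phi_k(x') := \chi(x'/\epsilon_k)$ and $\delta_k := \psi_{k+1} - \psi_k$, we have $g_k = \psi_k + \phi_k\delta_k$, and the product rule yields
\begin{equation*}
\partial_i\partial_j(\phi_k\delta_k) = \phi_k\,\partial_i\partial_j\delta_k + (\partial_i\phi_k)(\partial_j\delta_k) + (\partial_j\phi_k)(\partial_i\delta_k) + \delta_k\,\partial_i\partial_j\phi_k.
\end{equation*}
Using the Taylor expansion $\psi_k(x') = \frac{1}{2k}|x'|^2 + \mathrm{h.o.t.}$ one checks that on the support $|x'|\le\epsilon_k$ of $\phi_k$ one has $|\delta_k| \le C\epsilon_k^2/k^2$, $|\nabla\delta_k|\le C\epsilon_k/k^2$, and $|H\delta_k|\le C/k^2$, while $|\phi_k|\le 1$, $|\nabla\phi_k|\le C/\epsilon_k$, and $|H\phi_k|\le C/\epsilon_k^2$. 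Matching derivatives of $\phi_k$ with derivatives of $\delta_k$, each of the four terms above is $O(1/k^2)$, with the crucial cancellation in the last term coming from $\delta_k\cdot\partial_i\partial_j\phi_k = O(\epsilon_k^2/k^2)\cdot O(1/\epsilon_k^2)$. Hence $\|H(\phi_k\delta_k)\|_{\mathrm{op}} \le C/k^2$ uniformly in $\epsilon_k$ (provided only that $\epsilon_k \le 1$).

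A direct computation shows $H\psi_k(x') = \frac{1}{\sqrt{k^2-|x'|^2}}I + \frac{x'(x')^T}{(k^2-|x'|^2)^{3/2}}$, so all eigenvalues of $H\psi_k$ on $|x'|\le \epsilon_k$ lie in $\bigl[\tfrac{1}{k},\tfrac{1}{k}+O(\epsilon_k^2/k^3)\bigr]$. Combined with the Hessian estimate above, $Hg_k = \frac{1}{k}I + E_k$ with $\|E_k\|_{\mathrm{op}} \le C'/k^2$. Next, substitute into the curvature formula: for a tangent vector $v_p = (v',v_n)$ to $\Sigma_k$ with $v_n = \nabla g_k\cdot v'$, one has $H\rho_k(v_p)(v_p) = Hg_k(v')(v')$ and $\|\nabla\rho_k\| = \sqrt{1+|\nabla g_k|^2}$, while $|v_p|^2 = |v'|^2 + (\nabla g_k\cdot v')^2$. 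Since $|\nabla g_k| = O(\epsilon_k/k)$ on $|x'|\le\epsilon_k$ (again by product-rule bookkeeping), both the gradient norm factor and the ratio $|v_p|^2/|v'|^2$ equal $1+O(\epsilon_k^2/k^2)$, so
\begin{equation*}
\kappa^{\Sigma_k}_p(v_p) = \frac{1}{k} + O\!\left(\tfrac{1}{k^2}\right),
\end{equation*}
with an implied constant independent of $\epsilon_k$ (for $\epsilon_k\le 1$).

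It remains to convert this additive estimate into the ratio bound of the lemma. Since $\frac{1}{k} - \frac{1}{k+m} = \frac{m}{k(k+m)} \ge \frac{m}{2k^2}$ for $k\ge m$, and similarly $\frac{1}{k-m} - \frac{1}{k} \ge \frac{m}{k^2}$, we simply choose $m$ at least twice the implicit constant $C'$ in the preceding display, and then choose $N\ge 2m$ large enough that the $O(1/k^2)$ terms are indeed dominated by these gaps for all $k\ge N$; finally we shrink each $\epsilon_k$ to make the auxiliary multiplicative error $O(\epsilon_k^2/k^2)$ negligible compared to $m/k^2$. The main obstacle is the seemingly singular term $\delta_k\partial_i\partial_j\phi_k$, whose blow-up in $\epsilon_k$ is precisely cancelled by the quadratic vanishing of $\delta_k$ at the origin; once this cancellation is identified, the remaining estimates are routine.
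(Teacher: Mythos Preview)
Your proof is correct and follows essentially the same route as the paper: both show that the Hessian of the perturbation $\chi(x'/\epsilon_k)(\psi_{k+1}-\psi_k)$ is $O(1/k^2)$ uniformly in small $\epsilon_k$ by using the quadratic vanishing of $\psi_{k+1}-\psi_k$ at the origin to cancel the $1/\epsilon_k^2$ from differentiating the cutoff, and then conclude $\kappa^{\Sigma_k}_p = \tfrac{1}{k} + O(1/k^2)$. The only cosmetic difference is that the paper first replaces $\psi_{k+1}-\psi_k$ by its leading term $-\tfrac{1}{2k(k+1)}|x'|^2$ before estimating, whereas you bound $\delta_k$ and its derivatives directly via Taylor expansion.
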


If is now easy to see that if $\epsilon_k\searrow 0$ sufficiently fast, then $\Omega$ is convex, and 
strictly convex away from the origin.    If we let $\Omega_k$ denote the domain whose 
boundary near the origin is given by the graph of $f_k$, we see that $\Omega_k$ is 
strictly convex, the Hessian being positive definite everywhere.   Morover $\Omega=\cup_k\Omega_k$, and so $\Omega$ is convex.

\begin{proof}(of Lemma \ref{curvature})
When we estimate the curvature we may assume that the functions $g_k$ are simply
\begin{equation}
g_k(x')=\psi_{k}(x')-\chi(\frac{x'}{\epsilon_k})(\frac{1}{2k(k+1)})|x'|^2 = : \psi_k(x') + \sigma_k(x'),
\end{equation}
since the higher order terms missing in this expression of $g_k$ can be made 
insignificant by choosing $\epsilon_k$ small enough.  
Because of the $|x'|^2$-term it is easy to see that 
\begin{equation}
dg_k(x')=d\psi_k(x') + \triangle_k(x'),
\end{equation}
and 
\begin{equation}
Hg_k(x')=H\psi_k(x') + h_k(x'),
\end{equation}
where the coefficients in both $\triangle_k$ and $h_k$ are of order of magnitude $\frac{1}{k^2}$
independently of $k$ and of the choice of a small $\epsilon_k$.  \

Fix a point $x'$ and a vector $v\in\mathbb R^{n-1}$ with $\|v\|=1$.   Then a tangent vector 
$v_p$ at the point $(x',g_k(x'))$ is given by 
\begin{equation}
v_p=(v,dg_k(x')(v))=(v,d\psi_k(x')(v) + \triangle_k(x')(v)).
\end{equation}
Estimating the curvature we see that 
\begin{align*}
\kappa^{\Sigma_k}_{p}(v_p) & = \frac{(H\psi_k(x') + h_k(x'))(v_p)}{\|\nabla\rho_k(p)\|\|v_p\|^2}\\
& = \frac{(H\psi_k(x'))((v,d\psi_k(x')v) + (0',\triangle_k(x')(v)))}{\|{-\bf e_n} + \nabla\psi_k(p) + \nabla\sigma_k(x')\|\|(v,d\psi_k(x')(v))+(0',\triangle_k(x'))\|^2}\\
& + O(\frac{1}{k^2})\\
& = \frac{(H\psi_k(x'))((v,d\psi_k(x')v))}{\|-{\bf e_n} + \nabla\psi_k(x')\|(1 + O(\frac{1}{k^2}))\|(v,d\psi_k(x')(v))\|^2(1+O(\frac{1}{k^2}))^2}\\
& +O(\frac{1}{k^2})\\
& =  \frac{(H\psi_k(x'))((v,d\psi_k(x')v))}{\|-{\bf e_n} + \nabla\psi_k(x')\|\|(v,d\psi_k(x')(v))\|^2} + O(\frac{1}{k^2})\\
& = \frac{1}{k} + O(\frac{1}{k^2}),
\end{align*}
where the term $\frac{1}{k}$ comes from the fact that the expression above is the formula for the curvature of a ball 
of radius $k$.   From this it is straightforward to deduce the existence of an $m$ such that the lemma holds. 
\end{proof}

\subsection{The squeezing function on $\Omega$}

We will now explain why the squeezing function goes to one uniformly as we approach 
$b\Omega$ provided that the $\epsilon_k$'s decrease sufficiently fast.   Let $N,m$ be as 
in Lemma \ref{curvature}, and start by setting $f_k=\psi_k$ for some $k>N$.   \

Fix some small $\delta_k>0$.  By Lemma \ref{curvature}, if $\epsilon_k$ is small enough, 
we can for each $p=(x',x_n)\in b\Omega_k, \|x'\|<\delta_k$, find a ball $B$ of radius $k+m$ containing 
$\Omega_k$ such that $p\in bB$.  By the same lemma we can for each such $p$ also 
find a local piece of a ball of radius $k-m$ touching $p$ from the inside of $\Omega_k$, and 
the size of the local ball is uniform.    So using Lemma \ref{mainestimate} we may find 
a $t_k>0$ small enough such that 
\begin{equation}\label{est1}
S_{\Omega_k}(x',x_n)\geq 1 - \frac{m}{(k+m)}
\end{equation}
if $x_n\leq t_k$.  \

Next, again by Lemma \ref{curvature}, we find a $\delta_{k+1}<\delta_k$ such that 
if $\epsilon_{k+1}$ is small enough, then for each $p=(x',x_n)\in b\Omega_{k+1}$ 
with $\|x'\|<\delta_{k+1}$, we may oscillate with balls of radius $k+1-m$ and 
$k+1+m$ respectively.  So there is a $t_{k+1}<t_k$ such that 
\begin{equation}
S_{\Omega_{k+1}}(x',x_n)\geq 1 - \frac{m}{(k+1+m)}
\end{equation}
if $x_n\leq t_{k+1}$.  Furthermore, by further decreasing $\epsilon_{k+1}$ we 
can keep the estimate \eqref{est1} with $\Omega_k$ replaced by $\Omega_{k+1}$.  The reason is the following.  First of all, 
by \cite{FornæssWold1} there exists a constant $C_k$ such that 
\begin{equation}
S_{\Omega_k}(z)\geq 1 - C_k\cdot \mathrm{dist}(z,b\Omega_k),
\end{equation} 
and near any compact $K\subset b\Omega_k$ away from $0$, this estimate is not going to be 
disturbed by a small perturbation of $b\Omega_k$ near the point $0$; the estimate 
is obtained by using oscillating balls at points of $K$ whose boundaries will stay bounded 
away from $0$.  Furthermore, on any compact subset of $\Omega_k$ we have 
that $S_{\Omega_{k+1}}\rightarrow S_{\Omega_k}$ as $\epsilon_{k+1}\rightarrow 0$.  \

Continuing in this fashion, we obtain a decreasing sequence $0<t_j<t_{j+1}, j=k,k+1,...$, and 
an increasing sequence of domains $\Omega_j$, such that for each $j$ we have that 
\begin{equation}
S_{\Omega_j}(x',x_n)\geq 1 - \frac{m}{(k+i+m)}
\end{equation}
for $t_{k+i}\leq x_n\leq t_{k+i-1}$, for $i\leq j$.  The result now follows from Lemma \ref{increase}.  \

\section{Lemmata}

Let $0<s<1/2, 0<d<r<1$, and set $B_s=B(s,1-s)$.  Furthermore we set 
\begin{equation}
B_{s,d}=B_s\cap\{(z_1,z')\in\mathbb B^n:\mathcal{R}(z_1)>d\}.
\end{equation}

\begin{lemma}\label{mainestimate}
If $B_{s,d}\subset\Omega\subset\mathbb B^n$, and if $r>1-\frac{sd}{4}$, then
$S_\Omega(r,0)>1-s$. 
\end{lemma}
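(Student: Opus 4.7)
Plan. The natural candidate for the embedding is $f=\phi_r$, the involutive automorphism of $\mathbb B^n$ that interchanges $(r,0,\ldots,0)$ with the origin, explicitly
\begin{equation*}
\phi_r(z_1,z')=\left(\frac{r-z_1}{1-rz_1},\;\frac{-\sqrt{1-r^2}\,z'}{1-rz_1}\right).
\end{equation*}
Since $\Omega\subset\mathbb B^n$, the restriction $f=\phi_r|_\Omega$ is a valid embedding with $f(r,0)=0$ and $f(\Omega)\subset\mathbb B^n$. Using $\phi_r\circ\phi_r=\mathrm{id}$, the required inclusion $B_{1-s}(0)\subset f(\Omega)$ is equivalent to $\phi_r(B_{1-s}(0))\subset\Omega$, and since $B_{s,d}\subset\Omega$, it would suffice to prove
\begin{equation*}
\phi_r\bigl(B_{1-s}(0)\bigr)\subset B_{s,d}.
\end{equation*}

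I would verify this by checking the two defining conditions of $B_{s,d}$ separately. For the ball condition $\phi_r(B_{1-s}(0))\subset B_s$, expand $|\phi_r(w)-(s,0,\ldots,0)|^2$ using the explicit formula, clear the common denominator $|1-rw_1|^2$, and reduce to a polynomial inequality in $r,s,\operatorname{Re}(w_1),|w_1|^2,|w'|^2$; after isolating a factor $1-r$, the resulting inequality can be closed using $|w_1|^2+|w'|^2<(1-s)^2$ together with the closeness of $r$ to $1$. For the half-space condition $\operatorname{Re}((\phi_r(w))_1)>d$, note that $(\phi_r(w))_1=(r-w_1)/(1-rw_1)$ depends only on $w_1$, so this becomes a one-variable Möbius problem: the image of the closed disk $|w_1|\le1-s$ under this map is a closed sub-disk of $\mathbb D$ whose leftmost real point equals $(r+s-1)/(1-r+rs)$. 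The hypothesis $r>1-sd/4$ is calibrated so that this minimum exceeds $d$, which after rearrangement is an elementary algebraic check.

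Combining the two conditions gives $\phi_r(B_{1-s}(0))\subset B_{s,d}\subset\Omega$, whence $S_\Omega(r,0)\geq 1-s$; the strict inequality $S_\Omega(r,0)>1-s$ follows because the hypothesis $r>1-sd/4$ is strict, leaving slack for a slightly larger ball to fit inside $\phi_r(\Omega)$. I expect the main technical obstacle to be the first containment: the quadratic form $|\phi_r(w)-(s,0,\ldots,0)|^2$ couples the $w_1$- and transverse components nontrivially through the common denominator $|1-rw_1|^2$, so a careful accounting is required to extract the correct leading-order behaviour in $1-r$. The half-space condition is essentially routine once the one-variable reduction is made, and the precise constant $1/4$ in the hypothesis is presumably optimised so that both inequalities close simultaneously with a small margin.
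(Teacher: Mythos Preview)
Your plan is natural, but the half-space step does not close under the stated hypothesis, so the argument has a genuine gap. You claim that the leftmost real point of $\phi_r(\overline{B_{1-s}(0)})$ in the $z_1$-coordinate is $(r+s-1)/(1-r+rs)$, which is correct, and that the hypothesis $r>1-\tfrac{sd}{4}$ forces this to exceed $d$; but that implication is false. Writing $\epsilon=1-r$, the condition $(s-\epsilon)/(s+\epsilon(1-s))>d$ rearranges to $\epsilon<\dfrac{s(1-d)}{1+d-ds}$, whose right-hand side tends to $0$ as $d\to 1$, while the hypothesis only guarantees $\epsilon<\tfrac{sd}{4}\to\tfrac{s}{4}$. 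Concretely, take $s=0.4,\ d=0.9,\ r=0.92$: then $r>1-\tfrac{sd}{4}=0.91$, yet
\[
\frac{r+s-1}{1-r+rs}=\frac{0.32}{0.448}\approx 0.714<0.9=d,
\]
so $\phi_r(B_{1-s}(0))\not\subset\{\operatorname{Re}z_1>d\}$ and hence $\phi_r(B_{1-s}(0))\not\subset B_{s,d}$. Thus the containment you need simply fails, and no ``elementary algebraic check'' can rescue it; the automorphism $\phi_r$ by itself does not produce a ball of radius $1-s$ inside $f(\Omega)$ under the given hypothesis on $r$.

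The paper proceeds differently: instead of trying to fit the round ball $\phi_r(B_{1-s}(0))$ into $B_{s,d}$, it first inscribes in $B_{s,d}$ an \emph{ellipsoid}
\[
B^\mu_\eta=\Bigl\{|z_1-(1-\eta)|^2+\tfrac{\eta}{\mu}|z'|^2<\eta^2\Bigr\},\qquad \mu=1-s,\ \eta=\tfrac{d}{2},
\]
which is short enough in the $z_1$-direction to satisfy $\operatorname{Re}z_1>d$ automatically, and then invokes an earlier squeezing estimate (Lemma~3.5 of \cite{Forn�ssWold1}) for the configuration $B^\mu_\eta\subset\Omega\subset\mathbb B^n$, yielding $S_\Omega(r,0)\ge\sqrt{(1-s)\bigl(1-4(1-r)/d\bigr)}$, which exceeds $1-s$ precisely when $r>1-\tfrac{sd}{4}$. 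The key point your approach misses is that one must deform anisotropically (shrinking more in the $z_1$-direction than transversally) to respect the half-space cut; the ball automorphism alone cannot do this.
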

\begin{proof}
Set $\mu=1-s$ and $\eta=\frac{d}{2}$, and then 
\begin{equation}
B^{\mu}_\eta=\{(z_1,z')\in\mathbb C^n:|z_1-(1-\eta)|^2 + \frac{\eta}{\mu}|z'|^2<\eta^2\}.
\end{equation}
Then certainly $\mathcal R(z_1)>d$ on $B^{\mu}_\eta$, and we also have that $B^{\mu}_\eta\subset B_s$.
To see the latter, we translate the two balls sending $(1,0')$ to the origin, where they are defined by 
\begin{equation}
\tilde B_s=\{(z_1,z'):2\mu\mathcal R(z_1) + |z|^2<0\},
\end{equation}
and 
\begin{equation}
\tilde B^{\mu}_\eta=\{(z_1,z'):2\eta\mathcal Re(z_1)+|z_1|^2 + \frac{\eta}{\mu}|z'|^2<0\}.
\end{equation}
And 
\begin{align*}
2\eta\mathcal Re(z_1)+|z_1|^2 + \frac{\eta}{\mu}|z'|^2<0 & \Rightarrow 2\eta\mathcal Re(z_1)+\frac{\eta}{\mu}|z_1|^2 + \frac{\eta}{\mu}|z'|^2<0\\
& \Leftrightarrow 2\mu\mathcal R(z_1) + |z|^2<0.
\end{align*}
According to Lemma 3.5 in \cite{FornæssWold1} we have that 
\begin{equation}
S_\Omega(r,0)\geq \sqrt{\mu}\sqrt{1-2(1-r)\frac{1}{\eta}}=\sqrt{(1-s)(1-\frac{4(1-r)}{d})},
\end{equation}
from which the lemma follows easily.
\end{proof}

\begin{lemma}\label{increase}
Let $\Omega_j\subset\Omega_{j+1}$ for $j\in\mathbb N$,  set $\Omega=\cup_j\Omega_j$, 
and assume that $\Omega$ is bounded. 
Let $z\in\Omega$, and assume that $S_{\Omega_j}(z)> 1 - \delta$ for all $j$ large 
enough so that $z\in\Omega_j$.  Then $S_\Omega(z)\geq 1 - \delta$.
\end{lemma}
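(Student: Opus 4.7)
The plan is a two-stage normal family argument. For each $j$ large enough that $z\in\Omega_j$, pick a holomorphic embedding $f_j:\Omega_j\to\mathbb{B}^n$ with $f_j(z)=0$ and $B_{r_j}(0)\subset f_j(\Omega_j)$ for some $r_j>1-\delta$. Since $\Omega=\cup_j\Omega_j$ is bounded and each $f_j$ takes values in $\mathbb{B}^n$, the family is uniformly bounded on every compact $K\subset\Omega$ (each such $K$ sits inside some $\Omega_j$ because the $\Omega_j$ exhaust $\Omega$). A diagonal application of Montel's theorem then produces a subsequence $f_{j_k}\to f$ locally uniformly on $\Omega$, with $f:\Omega\to\overline{\mathbb{B}^n}$ holomorphic and $f(z)=0$. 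To see that $f$ is an embedding into the open ball: each $f_j^{-1}:B_{1-\delta}(0)\to\Omega_j$ takes values in the bounded set $\Omega$, so Cauchy estimates give a uniform upper bound on $\|Df_j^{-1}(0)\|$ and hence a uniform lower bound on $|\det Df_j(z)|$; passing to the limit, $\det Df(z)\neq 0$, so Hurwitz's theorem in several variables forces $f$ to be injective, and the maximum principle pushes the image into the open ball.

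Now fix any $r<1-\delta$. By a further Montel extraction, $f_{j_k}^{-1}\to g$ locally uniformly on $B_r(0)$, with $g:B_r(0)\to\overline{\Omega}$ holomorphic and $g(0)=z$. The uniform convergence of $f_{j_k}$ on compacta in $\Omega$ gives $f\circ g=\mathrm{id}$ on the open set $A:=g^{-1}(\Omega)\cap B_r(0)$, which contains $0$. Differentiating this identity at $0$ shows $\det Dg(0)\neq 0$, so Hurwitz upgrades $g$ to be injective on all of $B_r(0)$. The main obstacle, and the only delicate step, is to rule out that $g$ takes values in $b\Omega$. For this I would invoke the classical theorem that an injective holomorphic map between equidimensional complex domains has nowhere vanishing Jacobian; thus $g$ is an open map, so $g(B_r(0))$ is an open subset of $\overline{\Omega}$, and hence contained in $\mathrm{int}(\overline{\Omega})=\Omega$ (automatic for the convex $\Omega$ arising in the paper). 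Therefore $A=B_r(0)$, and $f\circ g=\mathrm{id}$ on $B_r(0)$ gives $B_r(0)\subset f(\Omega)$, so $S_\Omega(z)\geq r$. Letting $r\nearrow 1-\delta$ completes the proof.
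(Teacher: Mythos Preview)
Your argument is correct and follows the same two-sided normal-families strategy as the paper: extract a limit $f$ of the $f_j$ and a limit $g$ of the inverses $g_j=f_j^{-1}$, then verify $f\circ g=\mathrm{id}$ so that $B_{1-\delta}(0)\subset f(\Omega)$. You are in fact more careful than the paper's sketch on two points it leaves implicit---that the limit $f$ is genuinely an embedding (via the Jacobian/Hurwitz argument), and that $g$ lands in $\Omega$ rather than merely $\overline{\Omega}$ (via openness of $g$ and $\mathrm{int}(\overline\Omega)=\Omega$ for convex $\Omega$).
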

\begin{proof}
Let $f_j:\Omega_j\rightarrow\mathbb B^n$ be an embedding such that $f_j(z)=0$
and $B_{1-\delta}(0)\subset f_j(\Omega_j)$.  By passing to a subsequence we 
may assume that $f_j\rightarrow f:\Omega\rightarrow\mathbb B^n$ u.o.c., with 
$f(z)=0$.  Setting $g_j=f_j^{-1}:B_{1-\delta}(0)\rightarrow\Omega$ we may 
also assume that $g_j\rightarrow g$ u.o.c.  Then $f|_{g(B_{1-\delta}(0))}=g^{-1}$, from 
which the result follows.  
\end{proof}

\section{Some open problems}

\begin{problem}
Does Zimmer's result hold for pseudoconvex domains of class $C^\infty$?
\end{problem}
\begin{problem}
How much smoothness is needed for Zimmer's result hold for convex/pseudoconvex domains?
\end{problem}
\begin{problem}
Let $\Omega\subset\mathbb C^2$ be a bounded pseudoconvex domain of class $C^\infty$. 
Is $S_\Omega(z)$ bounded away from zero? 
\end{problem}
Yeung \cite{Yeung} showed that the answer is yes for strongly convex domains in $\mathbb C^n$, 
and Kim-Zhang \cite{KZ} and Deng-Guan-Zhang \cite{DGZ2} showed that the answer is yes
for strictly pseudoconvex domains.   On the other hand, Forn\ae ss-Rong \cite{FR} showed 
that the answer is no for $n\geq 3$.  \

Quantifying the asymptotic behaviour of the squeezing function, Forn\ae ss-Wold \cite{FornæssWold1}
showed that 
\begin{itemize}
\item[(i)] $S_\Omega(z)\geq 1-C\mathrm{dist}(z,b\Omega)$, and 
\item[(ii)] $S_\Omega(z)\geq 1-C\sqrt{\mathrm{dist}(z,b\Omega)}$,
\end{itemize}
for strongly pseudoconvex domains 
of class $C^4$ and $C^3$ respectively.   Diederich-Forn\ae ss-Wold \cite{DFW} showed that 
if the the squeezing function approaches one essentially faster than (i), then $\Omega$
is biholomorphic to the unit ball.  
\begin{problem}
What is the optimal estimate for the squeezing function for strictly pseudoconvex domains 
of class $C^k$ with $k<4$?
\end{problem}
Let $\phi:\mathbb B^2\rightarrow\mathbb C^2$ be defined as $\phi(z_1,z_2):=({z_1,-z_2\log(z_1-1)})$.
Then $\Omega:=\phi(\mathbb B^2)$ is of class $C^1$, and $(1,0)$ is a non-strictly 
pseudoconvex boundary point of $\Omega$.  So $S_\Omega$ being identically equal to one does 
not even imply strict pseudoconvexity in the case of $C^1$-smooth boundaries.  

\begin{problem}
Let $\phi:\mathbb B^n\rightarrow\Omega$ be a biholomorphism, and assume that $\Omega$
is a bounded $C^2$-smooth domain.  Is $\Omega$ strictly pseudoconvex?
\end{problem}

\end{document}